\newtheorem{theorem}{Theorem}[section]
\newtheorem{proposition}[theorem]{Proposition}
\theoremstyle{definition}
\title[Realizing the associahedron]{Realizing the associahedron:\\ Mysteries and questions}
\author{Cesar Ceballos}
\address[Cesar Ceballos]{Inst.\ Mathematics, FU Berlin, Arnimallee 2, 14195 Berlin, Germany.}  \email{ceballos@math.fu-berlin.de}
\author{G\"unter M. Ziegler}
\address[G\"unter M. Ziegler]{Inst.\ Mathematics, FU Berlin, Arnimallee 2, 14195 Berlin, Germany.} \email{ziegler@math.fu-berlin.de}
\thanks{
	The first author was supported by DFG 
	via the Research Training Group \emph{Methods for Discrete Structures}
	and by Berlin Mathematical School. 
	The research leading to these results has received funding from the European Research
	Council under the European Union's Seventh Framework Programme (FP7/2007-2013) /
	ERC Grant agreement no.~247029-SDModels”.
We are grateful to Carsten Lange and Paco Santos for many interesting discussions.}
\begin{document}
\maketitle
\begin{abstract}
There are many open problems and some mysteries connected to the realizations of the associahedra as convex polytopes. In this note, we describe three --- concerning special realizations with the vertices on a sphere, the space of all possible realizations, and possible realizations of the multiassociahedron.
\end{abstract}

\section{Introduction}

Realizing the $n$-dimensional
 associahedron as a convex polytope is a non-trivial task: You are given
the \emph{combinatorics} of a polytope, and you are
supposed to produce \emph{geometry}, namely coordinates for
a correct realization -- such that the vertices  correspond to the triangulations of an $(n+3)$-gon,
and the facets to its diagonals, and a vertex lies on a facet if and only if the triangulation uses the diagonal.

The realization problem appeared first in Tamari's thesis from 1951 \cite{Z_Tamari1951}. It was explicitly
posed by Stasheff's 1963 paper \cite{Z_St63}, and first
solved somewhat ``by hand'': As far as we know, the $n$-dimensional associahedron was constructed
\begin{compactitem}[--] 
	\item 1963 by Stasheff \cite{Z_St63} as a cellular ball,
	\item 1960s by Milnor for the first time as a polytope (lost),
	\item 1978 by Huguet \& Tamari (see \cite{Z_HuguetTamari1978}: no proof given),
	\item 1984 by Haiman (unpublished, but see \cite{Z_Ha84}), and finally
	\item 1989 by Lee (the first published realization: \cite{Z_Lee2}).
\end{compactitem}

Subsequently, more systematic construction methods emerged,
among them 
\begin{compactitem}[--]
\item	the construction as secondary polytopes of convex $(n+3)$-gons,
\item   the construction from cluster complexes of the root systems $A_n$, and
\item   the construction as a (weighted) Minkowski sum of faces of a simplex,
\end{compactitem}
all of them described in more detail below. In recent work \cite{Z_Z121new}, we have 
discovered that the realizations produced by these three families of
constructions are disjoint, and that they
can be distinguished by quite remarkable, geometric properties -- and moreover,
that there are many more realizations that seem natural as well,
including the exponentially-sized family of Hohlweg \& Lange 
\cite{Z_hohlweg_realizations_2007} \cite[Sect.~4]{Z_Z121new},  and the even larger, 
Catalan-sized family of Santos \cite{Z_Sa04} \cite[Sect.~5]{Z_Z121new}.

There are many open problems and some mysteries connected to the realizations of the associahedra
as convex polytopes. In this note, we describe three: 
\begin{compactitem}[$\circ$] 
\item There are several very natural, but fundamentally 
	different constructions of the $n$-dimensional associahedron, which produce disjoint parameterized
	families of polytopes. How do these families lie in the \emph{realization space} (defined below)
	of the $n$-dimensional associahedron? How do they relate?
\item The associahedron constructed as the secondary polytope of $n+3$ equally-spaced
points on a quadratic planar curve turns out to have all its vertices on an ellipsoid.
This phenomenon extends to the permuto-associahedron and to the cyclohedron.
Explain!

	\item Generalization of triangulations to multitriangulations leads
	to multiassociahedra and, more generally, to generalized multiassociahedra.
	Up to now, one can show that these combinatorial objects are vertex-decomposable
	spheres, but (how) can they be realized as convex polytopes?
\end{compactitem}
Of course this note is written with the hope to clarify the situation and to
explain some observations and pieces of progress related to the problems.
However, some mystery remains, and perhaps this is also natural, in view of 
the sentence that starts Haiman's 1984 manuscript \cite{Z_Ha84}:\\
\centerline{
\includegraphics[width=.7\textwidth]{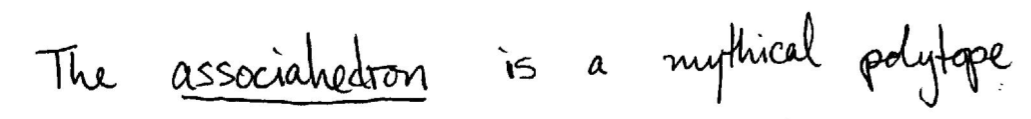}}

\section{Realization space}

As just mentioned, there are three very natural, but fundamentally 
different constructions of the associahedron that may be considered to be ``classical''
by now:
\begin{compactitem}[(I)] 
\item as the secondary polytope of a convex $(n+3)$-gon $Q$ by Gelfand, Kapranov \& Zelevinsky
    \cite{Z_GZK90} \cite{Z_GZK91} (see also \cite[Chap.~7]{Z_GKZ94}),
\begin{equation}\label{Z_def:Secondary}
	{\rm Ass}_n(Q) 
	\ :=\ \text{conv}\{ \sum_{i=1}^{n+3} \sum_{\sigma\in T\,:\, i\in\sigma} \text{vol} (\sigma) f_i :
	 T \text{ is a triangulation of } Q \},
\end{equation}
	where $f_0,\dots,f_{n+2}$ are the vertices of an $(n+2)$-simplex,
\item via cluster complexes of the root system $A_n$ as conjectured by Fomin \& Zelevinsky~\cite{Z_FZ03} and constructed by Chapoton, Fomin \& Zelevinsky \cite{Z_CFZ02}, 
	\[
		{\rm Ass}_n(A_n):= 
		\{ x\in\mathbb{R}^{n+1} \mid x_i-x_j\le f_{i,j}\text{ for } i-j\ge-1,\ \textstyle\sum_i x_i=0\}
\]
	for suitable $f_{i,j}>0$, and
\item as Minkowski sums of simplices, as introduced by Postnikov in \cite{Z_Po05}
	\[
	{\rm Ass}_n(\Delta_n):=
	\textstyle\sum_{1\le i<j\le n}\alpha_{i,j}\Delta_{[i\dots j]},
	\]
	for arbitrary $\alpha_{i,j}>0$,
	which in various different descriptions appears in earlier work by various other authors,
including Shnider \& Sternberg \cite{Z_ShniderSternberg93}, Loday~\cite{Z_Lo04}, Rote \& Santos \& Streinu~\cite{Z_RoSaSt03}.
\end{compactitem}
Some of these realizations have very striking properties, such as
the vertices on a sphere (see below), or having facet normals in the root system $A_n$. 

One would perhaps expect that ``if you set the parameters right‚''
you could get the one-and-only most beautiful realization, but a priori
it is not clear, which one would that be.
However, it turned out (see \cite{Z_Z121new})
that these approaches yield fundamentally distinct realizations.
For example, the associahedra produced as the secondary polytopes
of a convex $(n+3)$-gon don't have \emph{any} parallel facets, like the one in the
figure,
\begin{center}
	\includegraphics[height=28mm]{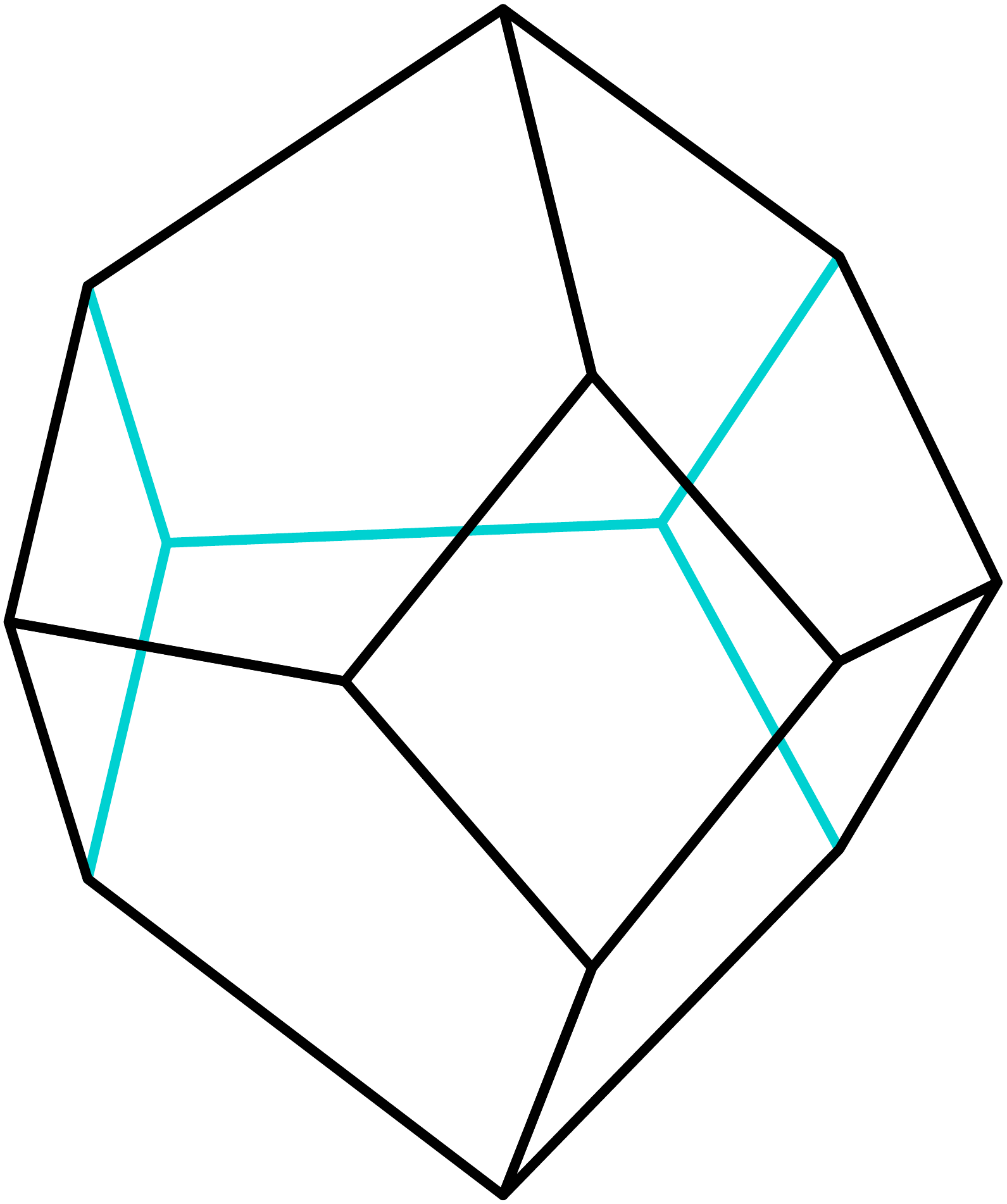}
\end{center}
whereas the others do, typically with $n$ pairs of parallel facets
that correspond to certain pairs of intersecting diagonals:
\begin{center}
	\includegraphics[width=.8\textwidth]{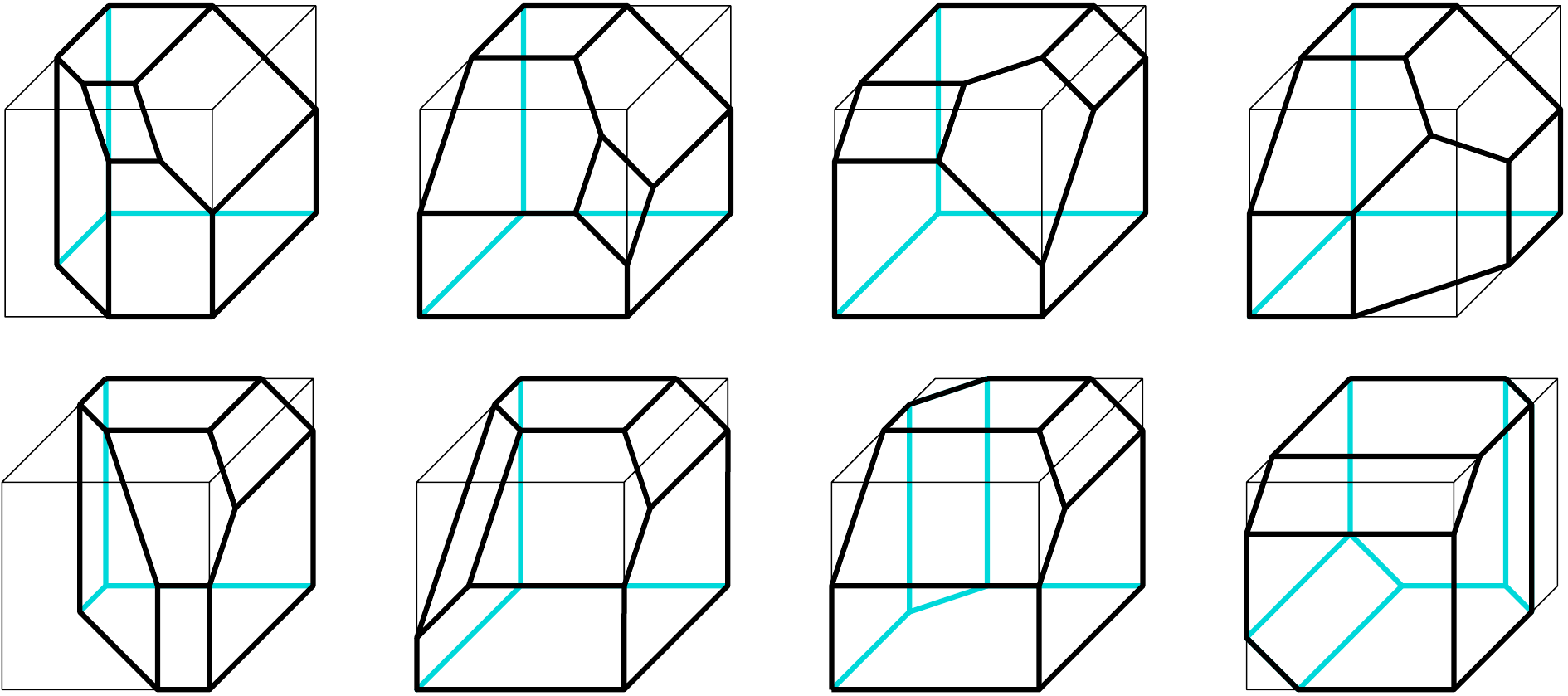}
\end{center}

With the huge number of different realizations 
that are analyzed and distinguished in \cite{Z_Z121new}, one is led to ask
a number of questions about the space of all realizations of
   the $n$-dimensional associahedron:
\begin{compactitem}[$\circ$] 
	\item What is the structure of the space? Is it contractible (if we
	divide out the action of the group of affine transformations, say)?
	Is it even connected?
	\item Do the constructions of associahedra that we know
	   cover a large/typical part of the realization space?
	\item Is there any connection between the realizations?
	Could we get some types as a deformation/limit of other types?
\end{compactitem}
The space of all realizations of (a combinatorial type of) a convex polytope
is known to be a semialgebraic set defined over $\mathbb Z$.
There are various possible definitions, which differ somewhat;
if we do not identify affinely equivalent realizations
and decide to only consider  
realizations with the origin in the interior, then 
the set of all such realizations -- called the \emph{realization space}--
for an $n$-dimensional polytope $P$ with $N$ facets can be identified
with the semialgebraic set 
\[
\{ C\in\mathbb{R}^{d\times f_0}:
\text{ the inequality system }
 C^tx\le1
\text{ defines a realization of }P\}/\text{Aff}(\mathbb R^d).
\]
We refer to Richter-Gebert \cite{Z_Rich4} for an extensive treatment of
realization spaces of polytopes and to \cite{Z_Z45} for an introduction.
The realization space of a simple $n$-dimensional polytope with 
$N=\frac12n(n+3)$ facets can be seen (if we again require the origin to lie in
the interior, and do not divide out a group action)
as an open subset of~$\mathbb{R}^{n\times N}$.
 It is known that realization spaces of some simple polytopes
   are disconnected -- there are sporadic examples in dimension~$4$
(see \cite{Z_BoGO})
and systematic constructions for high-dimensional simple polytopes \cite{Z_JMSW}.
But for the associahedron not much is known beyond dimension $3$,
where Steinitz proved in 1922 (\cite{Z_Stei1}; see~\cite{Z_Rich4})
that the realization space of any $3$-dimensional polytopes
after dividing out the action of the affine group is a topological ball
   of dimension $f_1-6$.

Here is one observation that needs to be followed up: 
The secondary polytope construction produces a realization of the
$n$-dimensional associahedron from any given convex $(n+3)$-gon.
In other words, we get an associahedron from any convex configuration of $n+3$ points in the plane.
The converse to this turns out to be \emph{false}:
``convex position'' is sufficient, but not necessary for getting an associahedron.

\begin{proposition}\label{Z_prop:weakly_convex}
	The secondary polytope of any configuration of $n+3$ points in the plane, which
	consists of all the vertices of a convex polygon and at most one 
	point in the relative interior of any edge, is an $n$-dimensional associahedron.
\end{proposition}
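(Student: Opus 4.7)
My plan is a continuity/perturbation argument anchored on the classical GKZ theorem for strictly convex polygons. Label the points of $A$ cyclically around $\mathrm{conv}(A)$ as $a_1,\dots,a_{n+3}$, so that the $k$ convex-polygon vertices and the (at most $k$) edge-interior points appear in the natural boundary order. With this labeling, the right-hand side of \eqref{Z_def:Secondary} makes sense for every one of the $C_{n+1}$ triangulations $T$ of the abstract $(n+3)$-gon on these labels: triangles whose three vertices happen to lie on a single edge of $P$ simply contribute zero area to $\phi_T$. The goal is to show that $\mathrm{conv}\{\phi_T\}$ is combinatorially the $n$-dimensional associahedron.

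Consider the one-parameter family $A_\varepsilon$ obtained by pushing each edge-interior point of $A$ slightly outward, so that $A_\varepsilon$ is in strictly convex position for every $\varepsilon>0$. By GKZ \cite{Z_GZK90, Z_GKZ94}, the secondary polytope of $A_\varepsilon$ is the $n$-dimensional associahedron, with vertex coordinates $\phi_T(A_\varepsilon)$ depending continuously on $\varepsilon$ and converging to $\phi_T(A)$ as $\varepsilon\to 0$. Since the combinatorial type of a continuously varying family of $V$-polytopes can only degenerate in the limit (vertices may merge or faces may flatten, but no new structure can appear), it suffices to rule out any such degeneration at $\varepsilon=0$. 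The core computation is the standard GKZ flip identity: for two triangulations $T,T'$ differing only in the diagonal of a quadrilateral $a_ia_ja_ka_l$,
\[
\phi_T-\phi_{T'} = \text{vol}(a_ja_ka_l)\,f_i - \text{vol}(a_ia_ka_l)\,f_j + \text{vol}(a_ia_ja_l)\,f_k - \text{vol}(a_ia_ja_k)\,f_l,
\]
which is nonzero unless all four of $a_i,a_j,a_k,a_l$ are collinear. The hypothesis that each edge of $P$ carries at most one extra point implies that no four points of $A$ are collinear (a line supporting four of them would either be a chord of $P$, meeting the boundary in at most two points, or coincide with an edge of $P$, which carries at most $2+1=3$ points of $A$). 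Thus every flip vector is nonzero at $\varepsilon=0$, and no two associahedron-adjacent vertices collide in the limit.

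The main obstacle is to upgrade ``no edge collapses'' to ``no combinatorial degeneration at all.'' Because the associahedron is a simple polytope, any degeneration of its combinatorial type in the limit would force at least one edge to shrink to zero length, which has just been ruled out. A more self-contained verification is to show that at each vertex $\phi_T$ the $n$ incident flip vectors remain linearly independent modulo the affine relations satisfied by all the $\phi$'s: the relevant determinant is a polynomial in the triangle areas of $A$, and the no-four-collinear-points hypothesis again suffices to show it does not vanish. This guarantees that each $\phi_T$ remains a simple vertex of the limit polytope, so the combinatorial type of the associahedron propagates from $\varepsilon>0$ to $\varepsilon=0$ and $\mathrm{conv}\{\phi_T\}$ realizes the $n$-dimensional associahedron.
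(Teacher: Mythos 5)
Your setup is fine: the GKZ vectors of the degenerate configuration are indeed the limits of those of the perturbed, strictly convex configurations $A_\varepsilon$ (the degenerate ``ear'' triangles contribute zero, matching the triangulations of $A$ that omit an edge-interior point), and your flip computation correctly shows, via the no-four-points-collinear observation, that no two associahedron-adjacent vertices collide at $\varepsilon=0$. The genuine gap is the bridge from ``no edge collapses'' to ``the combinatorial type survives the limit.'' The principle you invoke --- that a combinatorial degeneration of a simple polytope along a continuous family forces some edge length to tend to zero --- is false. Already for the pentagon, which is the $2$-dimensional associahedron, one can move a single vertex continuously into the interior of the convex hull of the other four while it stays at positive distance from its two neighbors: every edge length stays bounded away from zero, the two incident edge directions at the moving vertex stay linearly independent, and yet the limit polytope is a quadrilateral. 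The same example defeats your proposed ``more self-contained verification'': linear independence of the $n$ incident flip vectors at $\phi_T$ is a local condition that does not certify that $\phi_T$ is a vertex of the limit polytope, let alone that the whole face lattice is preserved; moreover the nonvanishing of ``the relevant determinant'' is only asserted, not established. To complete a limit argument you would need global control, e.g.\ that for each facet (each $k$-relevant diagonal) a facet-defining functional can be followed to $\varepsilon=0$ and still separates strictly all vertices not on that facet --- which is essentially as much work as a direct proof.

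Note that the paper's own proof avoids limits entirely and is purely combinatorial: by the Gelfand--Kapranov--Zelevinsky theory, the face lattice of the secondary polytope of a point configuration is the refinement poset of its regular subdivisions, so it suffices to identify the triangulations of the weakly convex configuration $P$ (with their subdivision structure) with those of a strictly convex $(n+3)$-gon $Q$. The bijection is the one implicit in your ear computation: a triangulation of $P$ that does not use the interior point of an edge $ij$ corresponds to the triangulation of $Q$ containing the diagonal $ij$, which cuts off the ear at the corresponding vertex. Either recasting your argument in this form, or actually carrying out the facet analysis in the limit, is what is missing from the proposal as written.
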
 

\begin{proof}
The combinatorial structure of the triangulations of a point configuration $P$
with these properties is exactly the same as the one for a configuration $Q$ 
of points in convex position.
If we cyclically label the vertices of $P$ and $Q$, a triangulation of $P$ corresponds 
to the triangulation of $Q$ consisting of the same diagonals of $P$ together
with the diagonals $i,j$ such that $i,j$ is an edge of the convex hull of $P$ that
has a relative interior point which does no appear on the triangulation. 
\end{proof}
\smallskip

Of course the point configurations of Proposition~\ref{Z_prop:weakly_convex}
are limit cases of strictly-convex configurations, and thus the
realizations of the associahedron obtained from them are
deformations of secondary polytopes of convex $(n+3)$-gons.
However, the deformations do not share all their properties: Indeed,
the following configuration of $6$ points 
\begin{center}
	\includegraphics[width=15mm]{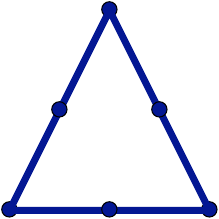}
\end{center}
produces an associahedron that has three pairs of parallel facets -- which
you can never get from a hexagon \cite[Thm.~3.5]{Z_Z121new}.
So, do we get associahedra from cluster algebras,
or associahedra from weighted Minkowski sums, 
as limit cases of secondary polytopes? (For dimension larger than
$3$ we cannot expect that; cf.~\cite[Remark 3.6]{Z_Z121new}.)

And which more general, non-convex planar point configurations
could still produce associahedra?

\section{Vertices on a sphere}

The associahedron constructed as the secondary polytope of $n$ equally-spaced
points on a quadratic planar curve turns out to have all its vertices on an ellipsoid,
or in suitable (and still natural) coordinates even on a sphere:

\begin{theorem}\label{Z_thm:sphericity}
	Let $p,q\in\mathbb{R}[t]$ be quadratic polynomials such that the
	convex curve $C=\{(p(t),q(t))\mid t\in\mathbb{R}\}$ is not a line (that is,
	such that $\{f,g,1\}$ are linearly independent), and let
	$v_0,\ldots,v_{n+2}$ be equally-spaced points on $C$,
	that is, $v_i:=(f(a+ib),g(a+ib))$ for $a,b\in\mathbb{R}$, $a\neq0$.

	Then the secondary polytope of  $Q:=\text{conv}\{v_0,\ldots,v_{n+2}\}$,
	constructed according to {\rm(\ref{Z_def:Secondary})} with
	$f_i:=e_1+\ldots+e_i$, has all its vertices on a sphere around the origin.
\end{theorem}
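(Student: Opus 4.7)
The plan is to reduce the sphericity claim to a combinatorial identity on Vandermonde-weighted triangulations of $\{0,1,\ldots,n+2\}$, and then verify it by flip invariance. The first step is a direct computation: for $v_i=(p(a+ib),q(a+ib))$ on the conic $C$, expanding $p(j)-p(i)=(j-i)\bigl(p_1+p_2(i+j)\bigr)$ and analogously for $q$ gives
\[
\operatorname{area}(v_iv_jv_k) = \tfrac{1}{2}|p_1q_2-p_2q_1|\,b^3\,(j-i)(k-i)(k-j)\qquad(i<j<k),
\]
and since a global rescaling only rescales the secondary polytope, we may assume $\operatorname{vol}(\{i,j,k\})=V(i,j,k):=(j-i)(k-i)(k-j)$, the Vandermonde. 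Sphericity then becomes the purely combinatorial statement that $\|\phi(T)\|^2=\sum_{i,k}w_i(T)w_k(T)\min(i,k)$ is independent of the triangulation $T$, where $w_i(T):=\sum_{\sigma\in T,\,i\in\sigma}V(\sigma)$. I have checked this for the quadrilateral and pentagon, and noticed additionally that each moment $\sum_ii^mw_i(T)$ is a triangulation invariant for $m=0,1,2$ (but not $m=3$), which is the hint for where the relevant identities come from.

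Since the flip graph of an $(n+3)$-gon is connected, it suffices to prove invariance under a single flip $T\to T'$ replacing the diagonal $ac$ by $bd$ in a quadrilateral $\{a,b,c,d\}$ with $a<b<c<d$. Only four weights change, by $\Delta w_a=V(bcd)$, $\Delta w_b=-V(acd)$, $\Delta w_c=V(abd)$, $\Delta w_d=-V(abc)$. Expanding
\[
\|\phi(T)\|^2-\|\phi(T')\|^2 \;=\; \sum_i\Delta w_i\,f_i\cdot\bigl(\phi(T)+\phi(T')\bigr)
\]
and splitting $\phi(T)+\phi(T')$ into a ``local'' part (from the four flip triangles) and a ``rest'' part, the ``rest'' contribution turns out to come only from the triangulations of the three sub-polygons on the vertex intervals $[a,b]$, $[b,c]$, $[c,d]$, because the auxiliary function $H(k):=\sum_i\Delta w_i\,|i-k|$ vanishes for $k\notin[a,d]$ — this vanishing is already the $m=0,1$ Lagrange interpolation identities $\sum_{k=0}^{3}(-1)^kx_k^mV_{\widehat k}=0$ applied on $\{a,b,c,d\}$ (the $m=0$ case being the Ptolemy relation $V(abc)+V(acd)=V(abd)+V(bcd)$).

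Inside $[a,d]$, $H$ is piecewise linear with explicit slopes $2\delta$, $2(\alpha-\beta)$, $2\alpha$ (in the obvious notation $\alpha=V(abc)$, etc.). The ``sub-polygon moment invariance''
\[
\sum_{\sigma\in T_{[r,s]}}V(\sigma)(a_\sigma+b_\sigma+c_\sigma) \;=\; \tfrac{3(r+s)}{2}\sum_\sigma V(\sigma)
\]
on the sub-polygon with vertex set $\{r,r+1,\ldots,s\}$, itself an instance of the $m=1$ Lagrange identity applied on the sub-polygon, then rewrites the ``rest'' contribution as an explicit polynomial in $a,b,c,d$ times the three fixed sub-polygon areas. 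The ``local'' contribution is directly a polynomial in $a,b,c,d$, and the final step — that the two polynomials cancel — should follow from the remaining $m=2$ Lagrange identity.

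The main obstacle I anticipate is exactly this last bookkeeping, a polynomial identity in the four variables $a,b,c,d$ that I would want to verify either symbolically or via a more conceptual invariant-theoretic argument; I do not yet see a conceptual reason why the Vandermonde weights conspire to make this work. A cleaner alternative, which I have not been able to find, would be a direct formula for $\|\phi(T)\|^2$ as a combination of the triangulation-invariant moments $\sum_ii^mw_i$ ($m=0,1,2$) together with fixed data of $Q$; since $\|\phi\|^2$ is a genuine quadratic form in $w$ such an identity is not automatic, but if it exists for the Vandermonde weights it would simultaneously account for the analogous sphericity that the paper mentions for the permuto-associahedron and the cyclohedron.
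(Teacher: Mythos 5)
Your reductions are sound, and they in effect track both halves of the paper's (very short) proof. The direct computation that every triangle area equals one fixed constant times the Vandermonde $V(i,j,k)=(j-i)(k-i)(k-j)$ is correct, and it replaces, more explicitly, the paper's appeal to functoriality of fiber polytopes under affine maps of the polygon $Q$ (which is how the paper reduces the general quadratic parametrization to the special case $(t,t^2)$ with unit spacing). The remaining claim --- that $\|\mathrm{GKZ}(T)\|^2$ is a flip invariant for the Vandermonde weights --- is exactly the part the paper does \emph{not} prove: it cites Reiner--Ziegler (1994), where this was ``established by a simple algebraic verification (with computer algebra support).'' Your structural analysis of the flip difference is correct as far as it goes: $f_i\cdot f_k=\min(i,k)$, the function $H(k)=\sum_i\Delta w_i\,|i-k|$ does vanish outside $[a,d]$ by the $m=0,1$ Lagrange identities, the slopes $2\delta$, $2(\alpha-\beta)=2(\delta-\gamma)$, $2\alpha$ are right, the rest of $T$ really does split into triangulations of the four sub-polygons cut off by the flip quadrilateral, and the sub-polygon first-moment identity follows from $m=1$ plus the symmetry $j\mapsto r+s-j$.

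The gap is that the one step carrying the entire content of the theorem --- the cancellation of the ``local'' polynomial against the ``rest'' polynomial in $a,b,c,d$ --- is only asserted to ``follow from the remaining $m=2$ Lagrange identity,'' and you explicitly defer its verification. As written this is an outline, not a proof: everything you have actually checked uses only the $m=0,1$ identities and would go through for weight systems for which sphericity fails (it fails already for secondary polytopes of general convex $(n+3)$-gons), so nothing verified so far isolates what is special about the Vandermonde weights. The verification is finite and feasible --- the sub-polygon ``areas'' $\sum_\sigma V(\sigma)$ on $\{r,\dots,s\}$ are explicit cubic polynomials in $s-r$ by the shoelace formula, so the whole flip difference is a single polynomial identity in $a,b,c,d$ that you should carry out symbolically, as Reiner and Ziegler did. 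Be aware that the ``more conceptual invariant-theoretic argument'' you hope for is precisely what the paper advertises as still missing; your closing suggestion, an identity expressing $\|\mathrm{GKZ}(T)\|^2$ through the invariant moments $\sum_i i^m w_i$ for $m\le2$ together with fixed data of $Q$, would be an answer to the paper's open question rather than a routine completion of this proof.
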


\begin{proof}
The description/construction of the secondary polytope of a convex polygon
as given here is motivated by the more general setting of
\emph{fiber polytopes} provided by Billera \& Sturmfels \cite{Z_BS92} \cite[Lect.~9]{Z_Zi}.
Theorem \ref{Z_thm:sphericity} was observed for the special
case $(p(t),q(t))=(t,t^2)$ and $a=1$ by Reiner \& Ziegler \cite{Z_ReZi}.
The more general Theorem~\ref{Z_thm:sphericity} follows from this 
by simple functoriality properties of the fiber polytopes: If
a polytope projection $\Delta_{n+2}\rightarrow Q$ is composed with
an affine transformation of the polygon~$Q$, the fiber polytope 
$\Sigma(\Delta_{n+2},Q)$ changes only by a multiplication by a constant factor. 
An affine transformation applied to the simplex $\Delta_{n+2}$ induces the same 
transformation on the fiber polytope.
\end{proof}

For the 1994 paper \cite{Z_ReZi}, the sphericity was discovered by chance,
and established by a simple algebraic verification (with computer algebra 
support), establishing that the length of the \emph{GKZ vector}  
(named after Gelfand, Kapranov and Zelevinsky)
\[
\text{GKZ}(T):= 
\sum_{i=1}^{n+3} \sum_{\sigma\in T\,:\, i\in\sigma}    \text{vol} (\sigma) f_i
\]
does not change under flips $T\rightarrow T'$. 
However, a ``geometric explanation'' was lacking then, and is still lacking now.
The true reason is still a mystery.

This is even more deplorable as the phenomenon occurs in other instances as well.
First, it does quite obviously extend to the realization of the
``permuto-associahedron'' that had been combinatorially described by
Kapranov \cite{Z_Kap}: Indeed, the length of the GKZ vector does not 
change under permutation of coordinates.

Moreover calculations (Ziegler 1994, unpublished) show there is a 
``secondary polytope like'' construction of the Bott--Taubes 
``cyclohedron'' \cite{Z_BottTaubes}
(also known as the ``type B'' generalized associahedron)
along quite similar lines, which again shows the same phenomenon:
It produces integer coordinates for the cyclohedron, with all
vertices on a sphere.
This can be verified in examples using e.g.\ \texttt{polymake} by Gawrilow \& Joswig \cite{Z_polymake},
it can be proved algebraically, but \emph{why} is it true?

\section{Realizing the multiassociahedron}
The boundary complex of the \emph{dual associahedron} is a simplicial
complex whose vertices correspond to diagonals of a convex polygon,
and whose faces correspond to subsets of non-crossing diagonals.
This complex can be naturally generalized to a beautiful family of simplicial
complexes with remarkable combinatorial properties. Members of this family are called 
\emph{simplicial multiassociahedra}.

Let $k\geq 1$ and $n\geq 2k+1$ be two positive integers. We say that a set 
of $k+1$ diagonals of a convex $n$-gon forms a \emph{$(k+1)$-crossing} 
if all the diagonals in this set are pairwise crossing. A diagonal is called 
\emph{$k$-relevant} if it is contained in some $(k+1)$-crossing, that is, if 
there are at least $k$ vertices of the $n$-gon on each side of the diagonal.
The \emph{simplicial multiassociahedron} $\Delta_{n,k}$ is the simplicial
complex of $(k+1)$-crossing-free sets of $k$-relevant diagonals of a convex 
$n$-gon. 
 
For example, the $2$-relevant diagonals of a convex $6$-gon 
(labeled as in the following figure, left picture) are  $14, 25$ and $36$,
	and the simplicial multiassociahedron $\Delta_{6,2}$ is the boundary complex
	of a triangle (right). The set of diagonals $\{14,25,36\}$ is not a face because
	they form a $3$-crossing.
\[
\includegraphics[scale=.6]{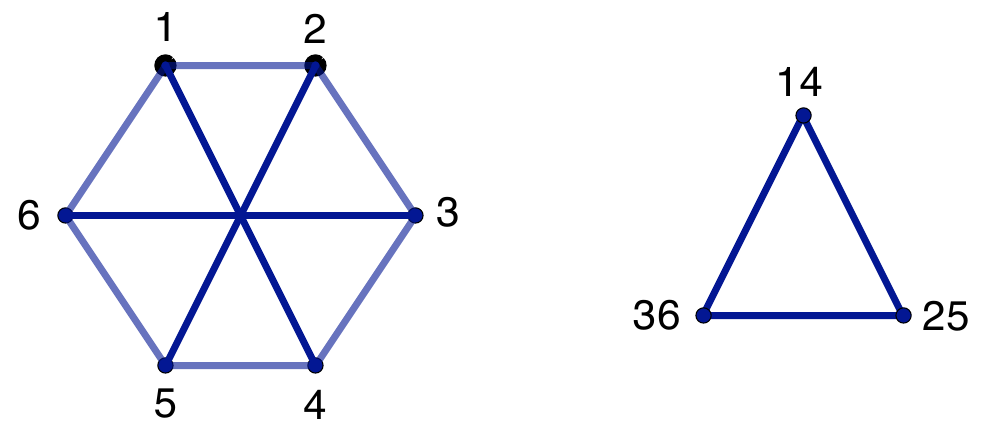}
\]

The vertices of the multiassociahedron $\Delta_{n,k}$ are given by $k$-relevant 
diagonals of the $n$-gon, and the facets correspond to \emph{$k$-triangulations},
that is, to maximal subsets of diagonals that do not contain any $(k+1)$-crossing.
For the case of $k=1$, the multiassociahedron is the simplicial complex of non-crossing
sets of diagonals, which coincides with the boundary complex of the dual associahedron.

The combinatorial structure of the multiassociahedron has been studied by several
authors. Apparently, it first appeared in work of Capoyleas \& Pach 
\cite{Z_capoyleas_a-turan_1992}, who showed that the maximal number of diagonals
in a $(k+1)$-crossing-free set is equal to $k(2n-2k-1)$. Nakamigawa 
\cite{Z_nakamigawa_a-generalization_2000} introduced the flip operation on 
$k$-triangulations and proved that the flip graph is connected.
Dress, Koolen \& Moulton \cite{Z_dress_on-line_2002} obtained a
reformulation of the Capoyleas--Pach result, and in particular proved that
all maximal $(k+1)$-crossing-free sets of diagonals have the same number of
diagonals. The results of Nakamigawa and Dress--Koolen--Moulton imply 
that the multiassociahedron $\Delta_{n,k}$ is a pure simplicial complex of
dimension $k(n-2k-1)-1$. A more recent approach for the study of $k$-triangulations,
using star polygons, was given by Pilaud \& Santos \cite{Z_pilaud_multitriangulations_2009}.
In 2003, Jonsson \cite{Z_jonsson_generalized_2003} showed that the multiassociahedron is a piecewise linear sphere. Then, he found an explicit $k\times k$ determinantal formula
of Catalan numbers counting the number of $k$-triangulations \cite{Z_jonsson_generalized_2005}. 
Additionally to the result of Jonsson about the multiassociahedron being a
topological sphere, Stump \cite{Z_stump_a-new_2011} proved that it is a
vertex-decomposable, and thus in particular shellable, simplicial sphere.
See also the results by Serrano \& Stump \cite{Z_serrano_maximal_2010}. 
 
All these results suggest that the multiassociahedron $\Delta_{n,k}$ could be 
realized as the boundary complex of a simplicial polytope of dimension $k(n-2k-1)$.
However, while for the classical associahedron we have many different construction
methods (see above), all the natural approaches seem
to fail for the multiassociahedron. 
The list of cases for which the multiassociahedron is known to be polytopal
is the following. The multiassociahedron $\Delta_{n,k}$ is the boundary 
complex of a:
\begin{compactitem}[$\circ$] 
	\item dual $(n-3)$-dimensional associahedron, if $k=1$;
	\item point, if $n=2k+1$;
	\item $k$-dimensional simplex, if $n=2k+2$;
	\item $2k$-dimensional cyclic polytope on $2k+3$ vertices, if $n=2k+3$ 		\cite{Z_pilaud_multitriangulations_2009};
	\item 6-dimensional simplicial polytope, if $n=8$ and $k=2$ 
		\cite{Z_bokowski_on-symmetric_2009}.   
\end{compactitem}  
  
Currently, the smallest open case is for $n=9$ and $k=2$.
Is there a simplicial polytope of dimension $8$ and $f$-vector
$(18, 153, 732, 2115, 3762, 4026, 2376, 594)$ which realizes 
the multiassociahedron $\Delta_{9,2}$?

Recently, the multiassociahedron has been generalized to a family of
vertex-decomposable simplicial spheres for finite Coxeter groups, 
by Ceballos, Labb\'e \& Stump \cite{Z_ceballos_subword_2011}.
They suggest a family of simple polytopes called \emph{generalized multiassociahedra}.
This family includes the generalized associahedra
\cite{Z_CFZ02} \cite{Z_hohlweg_permutahedra_2011}, and the  (simple) 
multiassociahedra of types $A$ and $B$ (see \cite{Z_soll_type-b_2009} for the type $B$ description). 
However, no polytopal realizations of generalized multiassociahedra
have been found except for the Coxeter groups of type $I_2(n)$, and for 
some particular cases in other types.
The (simple) generalized multiassociahedra of type $I_2(n)$ are given
by the duals of all even dimensional cyclic polytopes \cite{Z_ceballos_subword_2011}. 
Are there polytopal realizations for generalized multiassociahedra in general?

\providecommand{\bysame}{\leavevmode\hbox to3em{\hrulefill}\thinspace}
\providecommand{\MR}{\relax\ifhmode\unskip\space\fi MR }

\providecommand{\MRhref}[2]{%
  \href{http://www.ams.org/mathscinet-getitem?mr=#1}{#2}
}
\providecommand{\href}[2]{#2}

\end{document}